\numberwithin{equation}{section}
\theoremstyle{plain}
\newtheorem{theorem}{Theorem}[section]
\newtheorem{corollary}[theorem]{Corollary}
\theoremstyle{definition}
\newtheorem{definition}[theorem]{Definition}
\theoremstyle{remark}
\begin{document}

\title{Majorization for Starlike and Convex functions with respect to Conjugate points}
	\thanks{K. Gangania thanks to University Grant Commission, New-Delhi, India for providing Junior Research Fellowship under UGC-Ref. No.:1051/(CSIR-UGC NET JUNE 2017).}

	\author[Kamaljeet]{Kamaljeet Gangania}
	\address{Department of Applied Mathematics, Delhi Technological University,
		Delhi--110042, India}
	\email{gangania.m1991@gmail.com}
	
	\author[S. Sivaprasad Kumar]{S. Sivaprasad Kumar}
	\address{Department of Applied Mathematics, Delhi Technological University,
		Delhi--110042, India}
	\email{spkumar@dce.ac.in}

\maketitle	
	
\begin{abstract} 
	  The concept of majorization is now  well-known after the beautiful work of MacGregor, and then followed by Campbell in his sequel of papers. In this paper, we establish the sharp majorization results for the starlike and convex functions with respect to the conjugate points.	
\end{abstract}
\vspace{0.5cm}
	\noindent \textit{2010 AMS Subject Classification}. Primary 30C45, 30C50, Secondary 30C80.\\
	\noindent \textit{Keywords and Phrases}. Starlike and Convex function, Radius problems, Majorization, Conjugate points, Ma-Minda classes.

\maketitle
	
\section{Introduction}
Let $\mathcal{A}$ be the set of all normalized analytic functions $f(z)=z+ \sum_{n=2}^{\infty}a_n z^n$ in the open unit disc $\mathbb{D}:= \{ z: \vert z \vert <1\}$. The subclass of $\mathcal{A}$ of univalent functions be denoted by $\mathcal{S}$. Recall the following definition in want of our onward results:
\begin{definition}\cite{sub-pg132}
	Let $f$, $g$ and $\omega$ be analytic in $\vert z\vert <r$. 
	The function $g$ is majorized by $f$ denoted by $g<< f$  in $\vert z \vert <r$, if $\vert g(z) \vert \leq \vert f(z) \vert $ in $\vert z \vert<r$. 
	Let us recall that $g$ is subordinate to $f$ denoted by $g \prec f$ in $\vert z \vert < r$ if $g(z) = f(\omega(z))$, where $\vert \omega(z) \vert \leq \vert z \vert$ and  $\omega(0)$ in $\vert z \vert < r$. Further, if $f$ is univalent then $g\prec f$ if and only if $g(\mathbb{D}_r) \subseteq f(\mathbb{D}_r)$, where $\mathbb{D}_r := \{z : \vert z \vert <r \}$.
\end{definition}

In $1936$, Biernacki~\cite{Biernacki-1936} introduced Majorization-Subordination theory, proving that if  $g'(0) \geq 0$ and $g \prec f$, where $ f\in \mathcal{S}$ in $\mathbb{D}$, then $g << f$ in $\vert z \vert<1/4$. In the subsequent years, Goluzin, Tao Shah, Lewandowski and MacGregor examined a variety of related problems (for greater detail see~\cite{Campbell-1972}). In 1951, Goluzin~\cite{Goluzin-1969} proved that if $g'(0) \geq 0$ and $g \prec f$, where $f\in \mathcal{S}$ then $g' << f'$ in $\vert z \vert< 0.12$ and hypothesized the majorization radius as $\vert z \vert < 3 - \sqrt{8}$. Later, Tao Shah found this to be true in $1958$.

In $1967$, MacGregor~\cite{mc} proved sharp majorization for the class of univalent starlike and convex functions. Later, Campbell~\cite{Campbell-1972,campbell-1973,campbell-1974} obtained sharp majorization results for locally univalent functions in his sequel of three papers. Since then many authors proved majorization results for classes of meromorphic functions~\cite{Goyal,TangAouf-2015}, but claim for the sharpness is still open. In $2019$, Teng and Deng~\cite{tang} obtained results for several subclasses of starlike functions defined in view of Ma and Minda~\cite{minda94} classes, but here also sharpness was missing and it was not possible to mimic the method of sharpness used by MacGregor~\cite{mc}. This made the problem more interesting for the general Ma-Minda classes, which in 2021 solved by us~\cite{ganga_cmft2021}. 

Let us now recall an equivalent definition of majorization from \cite{mc}.
\begin{definition}\label{mc-def}\cite{mc}
	Let $f$ and $g$ be analytic in $\mathbb{D}$. A function  $g(z)$ is said to be majorized by $f(z)$, denoted by $g<<f$, if there exists an analytic function $\Phi(z)$ in $\mathbb{D}$ satisfying
	$ \vert \Phi(z)\vert \leq1$ and $g(z)=\Phi(z) f(z)$ for all $z\in\mathbb{D}.$
\end{definition}

\begin{theorem}[MacGregor Theorem~\cite{mc}]\label{mc-theoremunivalent}
	Let $g$ be majorized by $f$ in $\mathbb{D}$ and $g(0)=0$. If $f(z)$ is univalent in $\mathbb{D}$, then $\vert g'(z)\vert \leq \vert f'(z)\vert $ in $\vert z \vert \leq 2-\sqrt{3}$. The constant $2-\sqrt{3}$ is sharp.
\end{theorem}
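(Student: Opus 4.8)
The plan is to start from the majorization hypothesis $g(z)=\Phi(z)f(z)$ with $|\Phi(z)|\le 1$ and differentiate to obtain $g'(z)=\Phi'(z)f(z)+\Phi(z)f'(z)$. The triangle inequality then gives the pointwise bound
\begin{equation*}
|g'(z)|\le |\Phi'(z)|\,|f(z)|+|\Phi(z)|\,|f'(z)|,
\end{equation*}
so to prove $|g'(z)|\le |f'(z)|$ it suffices to show $|\Phi'(z)|\,|f(z)|\le (1-|\Phi(z)|)\,|f'(z)|$. The first step, therefore, is to control $|\Phi'(z)|$ in terms of $|\Phi(z)|$ and $|z|$. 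Since $\Phi$ is analytic with $|\Phi|\le 1$ on $\mathbb{D}$, the classical Schwarz--Pick type estimate yields $|\Phi'(z)|\le (1-|\Phi(z)|^2)/(1-|z|^2)$, and I would record this as the first lemma-level ingredient.

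The second step is to bound the ratio $|f(z)|/|f'(z)|$ for a univalent $f$. The key input here is the distortion/growth machinery for the class $\mathcal{S}$: from the Koebe-type inequalities one has, for $|z|=r$, bounds on $|f(z)|$ from above and $|f'(z)|$ from below, but more efficiently one uses the sharp estimate on the logarithmic derivative. Specifically, for $f\in\mathcal{S}$ the quantity $|zf'(z)/f(z)|$ and $|f(z)/(zf'(z))|$ are controlled, and combining these I expect the governing inequality to reduce to showing
\begin{equation*}
\frac{1-|\Phi|^2}{1-r^2}\cdot\frac{r}{\text{(lower bound on }|zf'/f|)}\le 1-|\Phi|,
\end{equation*}
i.e. after cancelling the factor $(1+|\Phi|)$, an inequality of the form $(1+|\Phi|)\,r\,/(1-r^2)\le(\text{something bounded below})$. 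Writing $c:=|\Phi(z)|\in[0,1]$, the condition becomes a quadratic-type inequality in $r$ whose worst case occurs at $c=1$.

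The third step is to optimize over $c\in[0,1]$ and $r$. Setting $c=1$ (the extremal choice for $\Phi$, corresponding to a point where $|\Phi|$ is maximal) collapses the Schwarz--Pick numerator but one must be careful because the bound degenerates; the honest extremal analysis keeps $c$ free and asks for which $r$ the inequality holds for \emph{all} admissible $c$. I anticipate the resulting master inequality takes the form $r^2-4r+1\ge 0$ (or an equivalent rearrangement), whose relevant root is precisely $r=2-\sqrt{3}$, matching the stated constant. Thus the radius $2-\sqrt{3}$ emerges as the largest $r$ for which the pointwise estimate survives uniformly in $c$.

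The main obstacle I expect is twofold: first, choosing the \emph{sharp} rather than merely convenient estimate for $|f(z)/f'(z)|$ on $|z|=r$, since a crude Koebe bound will produce a smaller (non-sharp) radius, so the correct quantity is the extremal value attained by the Koebe function $k(z)=z/(1-z)^2$; and second, handling the worst-case configuration of $\Phi$, where one must verify that the extremal $\Phi$ (a Blaschke factor $\Phi(z)=(z-a)/(1-\bar a z)$ type, or a rotation thereof) actually achieves equality simultaneously with the extremal $f$. For sharpness I would exhibit the pair $f=k$ the Koebe function together with an explicit $\Phi$ for which equality holds at $|z|=2-\sqrt{3}$, thereby confirming the constant cannot be enlarged.
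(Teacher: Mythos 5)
The paper states this theorem without proof, simply quoting MacGregor; your outline reproduces MacGregor's original argument, which is also exactly the template the paper follows in proving its own Theorem~2.1: decompose $g=\Phi f$, apply Schwarz--Pick to $\Phi$, bound $|f(z)/f'(z)|\le r(1+r)/(1-r)$ via the sharp estimate $|zf'(z)/f(z)|\ge (1-r)/(1+r)$ for $f\in\mathcal{S}$ (attained by the Koebe function), and reduce to $(1+|\Phi|)r/(1-r)^2\le 1$, whose worst case $|\Phi|=1$ gives $r^2-4r+1\ge0$, i.e. $r\le 2-\sqrt{3}$. One caveat on your last step: exhibiting equality at the single point $|z|=2-\sqrt{3}$ does not by itself show the radius cannot be enlarged; the correct sharpness argument (MacGregor's, and the one the paper adapts for Theorem~2.1) shows that for every $r>2-\sqrt{3}$ one can choose $\delta<1$ in the Blaschke-type factor $\Phi(z)=(z+\delta)/(1+\delta z)$, paired with the Koebe function, so that $|g'(r)|>|f'(r)|$.
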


In $2004$, the class of starlike and convex functions with respect to conjugate points, respectively were unified by Ravichandran~\cite{RaviConjugate-2004}, which are as follows:
\begin{definition}\cite{RaviConjugate-2004}
	Let us consider
	\begin{equation*}
	\mathcal{S}^*_{c}(\psi)= \left\{f\in \mathcal{A}: \frac{2zf'(z)}{f(z)+ \overline{f(\bar{z})} } \prec \psi(z)  \right\}
	\end{equation*}
	and 
	\begin{equation*}
	\mathcal{C}_{c}(\psi)= \left\{f\in \mathcal{A}: \frac{2(zf'(z))'}{(f(z)+ \overline{f(\bar{z})})' } \prec \psi(z)  \right\}.
	\end{equation*}
\end{definition}

For the standard notations and basic results of Ma and Minda classes of starlike and convex functions $\mathcal{S}^*(\psi)$ and $\mathcal{C}(\psi)$ respectively, see\cite{minda94}. For its connection to the classes $\mathcal{S}^*_{c}(\psi)$ and $\mathcal{C}_{c}(\psi)$, see~\cite{RaviConjugate-2004}. For some recent articles, we refer to see~\cite{goel2020,ganga-cardioid} and the references therein.

In this article, we prove sharp results, similar to Theorem~\ref{mc-theoremunivalent} in context of $\mathcal{S}^*_{c}(\psi)$ and $\mathcal{C}_{c}(\psi)$, where technique of sharpness is different.

\section{Majorization}

Let us consider the function $k_{\phi}\in \mathcal{A}$ be given by
\begin{equation*}
1+\frac{zk''_{\phi}(z)}{k'_{\phi}(z)}=\phi(z),
\end{equation*}
with $k_{\phi}(0)=k'_{\phi}(0)-1=0$, where $\phi$ is a Ma-Minda function, see~\cite{minda94}. Now we prove our main result.
\begin{theorem}\label{majorization-conjuagte}
	Let $\phi$ be convex in $\mathbb{D}$ with $\phi(0)=1$ and $\Re\phi(z)>0$.
	Suppose $\psi$ be the function satisfying 
	\begin{equation}\label{briot-sol}
	\psi(z)+\frac{z\psi'(z)}{\psi(z)}=\phi(z).
	\end{equation}
	Further let $m(r):=\underset {\vert z \vert =r}{\min} \vert\psi(z) \vert$. Let $g\in \mathcal{A}$. If $g<<f$ in $\mathbb{D}$, where $f\in \mathcal{C}_{c}(\phi)$ then
	$$\vert g'(z) \vert \leq \vert f'(z) \vert$$ holds in $\vert z \vert \leq r_{\psi}$,
	where $r_{\psi} \in (0,1)$ is the smallest root of
	\begin{equation}\label{r0}
	(1-r^2)m(r)-2r=0.
	\end{equation}
	The radius constant $r_{\psi}$ is sharp when $m(r)=\psi(-r)$.
\end{theorem}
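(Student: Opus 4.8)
The plan is to combine the multiplicative description of majorization in Definition~\ref{mc-def} with a sharp lower bound for $\lvert zf'(z)/f(z)\rvert$ over $\mathcal{C}_{c}(\phi)$. Writing $g=\Phi f$ with $\Phi$ analytic and $\lvert\Phi\rvert\le1$, differentiation gives $g'=\Phi'f+\Phi f'$, whence
\begin{equation*}
\lvert g'(z)\rvert\le\lvert\Phi'(z)\rvert\,\lvert f(z)\rvert+\lvert\Phi(z)\rvert\,\lvert f'(z)\rvert .
\end{equation*}
The Schwarz--Pick inequality $\lvert\Phi'(z)\rvert\le(1-\lvert\Phi(z)\rvert^{2})/(1-r^{2})$, with $r=\lvert z\rvert$, reduces the target $\lvert g'\rvert\le\lvert f'\rvert$ to $\frac{1+\lvert\Phi(z)\rvert}{1-r^{2}}\lvert f(z)\rvert\le\lvert f'(z)\rvert$; taking the worst case $\lvert\Phi\rvert\to1$ leaves the $\Phi$-free condition $\lvert f(z)/f'(z)\rvert\le(1-r^{2})/2$, equivalently
\begin{equation*}
\left\lvert\frac{zf'(z)}{f(z)}\right\rvert\ge\frac{2r}{1-r^{2}} .
\end{equation*}
A short computation shows this sufficient condition is in fact forced by the worst admissible $\Phi$, so the whole theorem is equivalent to a sharp bound for $\lvert zf'/f\rvert$.

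The heart of the argument is therefore the estimate $\lvert zf'(z)/f(z)\rvert\ge m(r)$ for every $f\in\mathcal{C}_{c}(\phi)$, which I would derive from the subordination $zf'/f\prec\psi$: since $\psi(0)=1$ (from \eqref{briot-sol} at $z=0$) and $\psi$ is zero-free, the minimum-modulus principle gives $\lvert\psi(\omega(z))\rvert\ge\min_{\lvert\zeta\rvert=r}\lvert\psi(\zeta)\rvert=m(r)$ whenever $\lvert\omega(z)\rvert\le r$, which is exactly the claim. To reach $zf'/f\prec\psi$ I would first exploit the symmetry of the class: with $F(z)=\tfrac12\bigl(f(z)+\overline{f(\bar z)}\bigr)$ (a function with real coefficients) one conjugates the defining subordination $(zf')'/F'\prec\phi$ and, using that $\phi$ has real coefficients (as is standard for Ma--Minda functions), obtains $(z\widetilde f\,')'/F'\prec\phi$ for $\widetilde f(z)=\overline{f(\bar z)}$; averaging the two and invoking the convexity of $\phi$ yields $(zF')'/F'\prec\phi$, i.e. $F\in\mathcal{C}(\phi)$. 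The Briot--Bouquet theory attached to \eqref{briot-sol} then gives $zF'/F\prec\psi$, with $\psi$ the best dominant.

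The main obstacle is the passage from the symmetrised function $F$ back to $f$ itself: the class controls $(zf')/F$ and $(zf')'/F'$ (equivalently $zf'\in\mathcal{S}^{*}_{c}(\phi)$), whereas the majorization step needs the ordinary quotient $zf'/f$. I would bridge this by analysing the generalised Briot--Bouquet relation $u+zu'/v\prec\phi$, in which $u=zf'/F$ and $v=zF'/F\prec\psi$, in order to propagate the dominant $\psi$ from $v$ to $u$, and then controlling the residual factor $F/f$. Showing that the bound $\lvert zf'/f\rvert\ge m(r)$ genuinely survives this transfer for \emph{all} members of $\mathcal{C}_{c}(\phi)$, and not only for the real-coefficient ones where $F=f$, is the delicate point on which the theorem rests, since for a non-symmetric $f$ the factor $F/f$ need not be unimodular.

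Granting the key estimate, the radius is immediate. Put $T(r)=(1-r^{2})m(r)-2r$; then $T(0)=m(0)=\lvert\psi(0)\rvert=1>0$, so by continuity $T(r)>0$ on $[0,r_{\psi})$ and $T(r_{\psi})=0$, where $r_{\psi}$ is the smallest root of \eqref{r0}. Hence $m(r)\ge 2r/(1-r^{2})$ for $r\le r_{\psi}$, and together with $\lvert zf'/f\rvert\ge m(r)$ this gives the reduced condition throughout $\lvert z\rvert\le r_{\psi}$, proving $\lvert g'\rvert\le\lvert f'\rvert$ there. For sharpness I would take $f=k_{\phi}$, which has real coefficients, so $F=f\in\mathcal{C}_{c}(\phi)$ and $zf'/f=\psi$ exactly; evaluating at $z=-r$, where $m(r)=\psi(-r)$ by hypothesis, and pairing $f$ with the extremal Schwarz function $\Phi$ attaining equality in Schwarz--Pick with aligned phases at $-r$, one finds $\lvert f(-r)/f'(-r)\rvert=r/m(r)>(1-r^{2})/2$ as soon as $r>r_{\psi}$, which forces $\lvert g'(-r)\rvert>\lvert f'(-r)\rvert$ for this $g=\Phi f$. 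Thus $r_{\psi}$ cannot be enlarged.
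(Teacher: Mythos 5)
Your skeleton coincides with the paper's: write $g=\Phi f$, differentiate, apply Schwarz--Pick, and reduce the whole claim to the single estimate $\lvert f(z)/f'(z)\rvert\le r/m(r)$, equivalently $\lvert zf'(z)/f(z)\rvert\ge m(r)$, valid for \emph{every} $f\in\mathcal{C}_{c}(\phi)$; the radius computation from $(1-r^2)m(r)-2r=0$ and the sharpness construction (a M\"obius $\Phi$ paired with the function satisfying $zf'(z)/f(z)=\psi(-z)$) are likewise essentially those of the paper. The problem is that you never establish the key estimate, and you say so yourself. Your route produces $F(z)=\tfrac12\bigl(f(z)+\overline{f(\bar z)}\bigr)\in\mathcal{C}(\phi)$ and hence $zF'/F\prec\psi$ by the Briot--Bouquet theory attached to \eqref{briot-sol}, but the quantity the majorization step needs is $zf'/f$, and the proposed ``generalised Briot--Bouquet relation'' $u+zu'/v\prec\phi$ with $u=zf'/F$ and $v=zF'/F$ is left entirely unanalysed. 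As it stands this is a genuine gap, not a technicality: the theorem rests precisely on that transfer, and nothing in the proposal shows $\lvert zf'/f\rvert\ge m(r)$ for a non-symmetric member of the class, where $F/f$ need not be unimodular.

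The paper closes this step by a different mechanism: it quotes the growth and distortion theorems for $\mathcal{C}_{c}(\phi)$ from \cite{RaviConjugate-2004}, which control $\lvert f(z)\rvert$, $\lvert f'(z)\rvert$ (and, in the form actually required, the quotient $\lvert f(z)/f'(z)\rvert$) by the corresponding quantities for the extremal function $k_{\phi}$; for $k_{\phi}$ one has $zk'_{\phi}(z)/k_{\phi}(z)=\psi(z)$ exactly, whence $k_{\phi}(r)/k'_{\phi}(r)=r/\psi(r)\le r/m(r)$, which is inequality \eqref{def_p5} and feeds directly into your reduced condition. In other words, the missing estimate is supplied not by propagating the subordination through the symmetrisation but by invoking the known distortion theory of the class, which reduces everything to the single function $k_{\phi}$. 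To make your argument complete you must either prove the distortion inequality $\lvert f(z)/f'(z)\rvert\le k_{\phi}(r)/k'_{\phi}(r)$ over $\mathcal{C}_{c}(\phi)$ (or cite it), or actually carry out the analysis of $u+zu'/v\prec\phi$ that you only gesture at; without one of these the central inequality, and with it the theorem, remains unproved. The remaining pieces --- the worst-case analysis in $\beta=\lvert\Phi(z)\rvert$, the existence of $r_{\psi}$ from $T(0)=1>0$, and the sharpness via a M\"obius $\Phi$ with aligned phases --- are sound and match the paper.
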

\begin{proof}
	Let $g$ be majorized by $f$, where $f\in \mathcal{C}_{c}(\phi)$. Then from Defintion~\ref{mc-def}, we have
	\begin{equation*} \label{def_p1}
	g(z)= \Phi(z) f(z),
	\end{equation*}
	where $\Phi$ is analytic in $\mathbb{D}$ and satisfy $\vert \Phi(z) \vert \leq 1$. Thus,
	\begin{equation}\label{def-p2}
	g'(z)= \Phi(z) f'(z) + \Phi'(z) f(z).
	\end{equation}
	The well-known Schwarz-Pick inequality for the function $\Phi$ yields
	$$\left \vert \Phi'(z) \right \vert \leq \frac{1-\vert \Phi(z)\vert ^2}{1-\vert z \vert ^2}.$$
	Let us write $\vert \Phi(z) \vert := \beta$.
	Now using growth and distortion theorems~\cite{RaviConjugate-2004} for the class $\mathcal{C}_{c}(\phi)$ in \eqref{def-p2} along with Schwarz-Pick inequality, we get
	\begin{align}\label{def_p3}
	\vert g'(z) \vert &\leq \vert \Phi(z) \vert \vert f'(z) \vert + \vert \Phi'(z) \vert \vert f(z) \vert \nonumber\\
	&\leq \beta k'_{\phi}(r) +\frac{1-\beta^2}{1-r^2} k_{\phi}(r)
	\end{align}
	for $\vert z \vert =r$. Since $\vert f'(z) \vert \leq  k'_{\phi}(r)$. Therefore, 
	\begin{equation}\label{def_p4}
	\left\vert \frac{g'(z)}{f'(z)} \right\vert \leq \beta +\frac{1-\beta^2}{1-r^2} \frac{k_{\phi}(r)}{k'_{\phi}(r)}.
	\end{equation}
	Now let $p$ be the Carath\'{e}odory function and satisfy
	\begin{equation}\label{briot}
	p(z)+\frac{zp'(z)}{p(z)} \prec \phi(z).
	\end{equation}
	Since $\Re\phi(z)>0$ and $\phi$ is convex in $\mathbb{D}$, \cite[Theorem~3.2d, p.~86]{sub-pg132} implies that the solution $\psi$ of \eqref{briot-sol}  exists and
	is analytic in $\mathbb{D}$ with $\Re\psi(z)>0$ and given by:
	\begin{equation*}
	\psi(z):= q(z)\left(\int_{0}^{z}\frac{q(t)}{t}dt\right)^{-1},
	\end{equation*}
	where 
	\begin{equation*}
	q(z)= z\exp\int_{0}^{z}\frac{\phi(t)-1}{t}dt.
	\end{equation*}
	Since $\Re\psi(z)>0$ and $p$ satisfies the subordination~\eqref{briot}, therefore \cite[Lemma~3.2e, p.~89]{sub-pg132} implies that $p\prec \psi$ and $\psi$ is the best univalent dominant.
	Thus, a function $f\in \mathcal{C}(\phi)$ implies $f\in \mathcal{S}^*(\psi)$, where $\psi$ satisfies~\eqref{briot-sol} follows by taking $p(z)=zf'(z)/f(z)$.
	Further, note that a function with real coefficients in $\mathcal{C}(\psi)$ belongs to $\mathcal{C}_{c}(\psi)$. In particular, taking $f=k_{\phi}$ we conclude that
	\begin{equation*}
	\frac{zk'_{\phi}(z)}{k_{\phi}(z)} = \psi(z),
	\end{equation*}
	which gives
	\begin{equation} \label{def_p5}
	\left \vert \frac{k_{\phi}(z)}{k'_{\phi}(z)} \right \vert \leq \frac{r}{m(r)}, \quad \text{for}\quad \vert z \vert =r.
	\end{equation}
	Using the inequality \eqref{def_p5} in \eqref{def_p4} gives
	\begin{equation}\label{def_p6}
	\left\vert \frac{g'(z)}{f'(z)} \right\vert \leq \beta +\frac{1-\beta^2}{1-r^2} \frac{r}{m(r)} =:h(\beta,r).
	\end{equation}
	Finally, to establish $h(\beta,r) \leq 1$, it is enough to see that
	\begin{equation*}
	\frac{\partial}{\partial\beta}h(\beta,r)=-r<0.
	\end{equation*}
	Hence, $h(\beta,r) \leq 1$ holds in $\vert z \vert =r \leq r_{\psi}$, where $r_{\psi}$ is the smallest postive root of 
	\begin{equation*}
	h(1,r):= (1-r^2)m(r)-2r=0. 
	\end{equation*}
	The existence of the root $r_\psi$ is evident.\\
	
	To show the result is sharp, let $m_r=\psi(-r)$ and $\Phi(z)=(z+\delta)/(1+\delta z)$, where $-1\leq \delta \leq 1$. Take $f(z)\in \mathcal{C}(\phi)$ such that $zf'(z)/f(z)=\psi(-z)$. Let us consider $r_2$ as the second consecutive positive root (if any) of equation~\eqref{r0}, otherwise take $r_2=1$. We prove that for each $r\in (r_{\psi}, r_2)$ we can select $\delta$ so that $0<f'(r)<g'(r)$, which means $g'$ can not be majorized by $f'$ in $\vert z \vert > r_{\psi}$. First note that the function $f$ is a rotation of $k_{\phi}$ with real coefficients and belongs to $\mathcal{C}_{c}(\phi)$ such that
	\begin{equation}\label{def_p7}
	\frac{f(r)}{f'(r)}=\frac{k_{\phi}(r)}{k'_{\phi}(r)}=\frac{r}{\psi(-r)}.
	\end{equation}
	Since 
	\begin{equation*}
	g'(r)= k'_{\phi}(r) \left( \frac{r+\delta}{1+\delta r}+ \frac{1-{\delta}^2}{(1+\delta r)^2} \frac{k_{\phi}(r)}{k'_{\phi}(r)} \right) =: f'(r) K(\delta, r)
	\end{equation*}
	and $K(1,r)=1$, it suffices to show that  ${\partial K(\delta,r)}/{\partial \delta}<0$ at $\delta=1$ in order to establish that $K(1-\epsilon,r)>1$, and hence $g'(r)>f'(r)>0$. But at $\delta=1$, we have:
	\begin{align*}
	\frac{\partial K( \delta,r)}{\partial \delta } &= \frac{2}{(1+r)^2} \left( \frac{1-r^2}{2} -\frac{k_{\phi}(r)}{k'_{\phi}(r)} \right)\\
	&= \frac{2}{(1+r)^2} \left( \frac{1-r^2}{2} -\frac{r}{\psi(-r)} \right)\\
	&<0
	\end{align*}
	using \eqref{r0}, \eqref{def_p6}, \eqref{def_p7} and the fact that $h(1,r)<0$ for all $r\in (r_{\psi}, r_2)$.   \qed
\end{proof}

Proof of the following result is omitted as it directly follows from Theorem~\ref{majorization-conjuagte}.	
\begin{theorem}
	Let us assume that
	$$m(r):=\underset{\vert z \vert =r}{\min} \vert \psi(z)\vert=\left\{
	\begin{array}{ll}
	\psi(-r), & \hbox{ if } \psi'(0)>0;  \\
	\psi(r), & \hbox{ if } \psi'(0)<0,
	\end{array}
	\right.$$
	where $\psi$ is a univalent function with positive real part and $\psi(0)=1$.
	Let $g\in \mathcal{A}$. If $g <<f$ in $\mathbb{D}$, where $f\in \mathcal{S}_{c}^{*}(\psi)$ then
	$$\vert g'(z)\vert  \leq \vert f'(z)\vert$$ holds in  $\vert z \vert \leq r_{\psi}$,
	where $r_{\psi}\in (0,1)$ is the smallest root of the equation
	\begin{equation*}\label{r0-s}
	(1-r^2)m(r)-2r=0.
	\end{equation*}
	The radius constant $r_\psi$ is sharp.
\end{theorem}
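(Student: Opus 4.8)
The plan is to run the argument of Theorem~\ref{majorization-conjuagte} almost verbatim, the essential simplification being that here $\psi$ is prescribed directly as a univalent function with $\Re\psi>0$ and $\psi(0)=1$, so there is no need to pass through the Briot--Bouquet equation~\eqref{briot-sol} or to invoke the convex-to-starlike reduction used there. Starting from $g<<f$ I would write $g(z)=\Phi(z)f(z)$ with $|\Phi|\le1$ as in Definition~\ref{mc-def}, differentiate to get $g'=\Phi f'+\Phi' f$, and apply the Schwarz--Pick inequality $|\Phi'(z)|\le(1-|\Phi(z)|^2)/(1-|z|^2)$. Writing $\beta=|\Phi(z)|$ this gives, exactly as in~\eqref{def_p3}--\eqref{def_p4},
\[
\left|\frac{g'(z)}{f'(z)}\right|\le \beta+\frac{1-\beta^2}{1-r^2}\left|\frac{f(z)}{f'(z)}\right|,\qquad |z|=r.
\]

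The one place that genuinely invokes the hypothesis $f\in\mathcal{S}^*_{c}(\psi)$ rather than $f\in\mathcal{C}_{c}(\phi)$ is the estimate for $|f(z)/f'(z)|$. Here I would use the growth and distortion theorems for the conjugate-point starlike class from Ravichandran~\cite{RaviConjugate-2004}, whose extremal function $k_\psi$ satisfies $zk_\psi'(z)/k_\psi(z)=\psi(z)$; as in~\eqref{def_p3}--\eqref{def_p5} this yields $|f(z)/f'(z)|\le|k_\psi(z)/k_\psi'(z)|\le r/m(r)$ on $|z|=r$. Substituting reproduces precisely the function
\[
h(\beta,r)=\beta+\frac{1-\beta^2}{1-r^2}\,\frac{r}{m(r)}
\]
of~\eqref{def_p6}. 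Since $h$ is monotone in $\beta$, the requirement $h(\beta,r)\le1$ reduces, at the governing value $\beta=1$, to $(1-r^2)m(r)-2r\ge0$, so $|g'/f'|\le1$ holds exactly up to the smallest root $r_\psi$ of~\eqref{r0}, identically to Theorem~\ref{majorization-conjuagte}.

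For sharpness I would again take $\Phi(z)=(z+\delta)/(1+\delta z)$ with $-1\le\delta\le1$ and choose $f\in\mathcal{S}^*_{c}(\psi)$ to be the rotation of $k_\psi$ that attains $m(r)$ on the positive real axis, so that $f(r)/f'(r)=r/m(r)$ reproduces the identity~\eqref{def_p7}; computing $\partial K(\delta,r)/\partial\delta$ at $\delta=1$ then shows $g'(r)>f'(r)>0$ for $r\in(r_\psi,r_2)$, so majorization of the derivatives fails beyond $r_\psi$. The only new bookkeeping compared with Theorem~\ref{majorization-conjuagte} is the case split in the definition of $m(r)$: because $\psi$ is univalent with positive real part, $|\psi|$ is minimized on $|z|=r$ at $z=-r$ when $\psi'(0)>0$ and at $z=r$ when $\psi'(0)<0$, and I would select the extremal rotation ($zf'/f=\psi(-z)$ or $zf'/f=\psi(z)$, respectively) accordingly. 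I expect the main (minor) obstacle to be justifying this location of the minimum modulus and confirming that the chosen rotation indeed lies in $\mathcal{S}^*_{c}(\psi)$ with real coefficients; once that is settled, every quantitative estimate is exactly the one already carried out in the proof of Theorem~\ref{majorization-conjuagte}.
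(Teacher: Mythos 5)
Your proposal is correct and coincides with the paper's approach: the paper omits the proof entirely, remarking only that the result ``directly follows'' from Theorem~\ref{majorization-conjuagte}, and what you write out is exactly that theorem's argument rerun for $\mathcal{S}^{*}_{c}(\psi)$ with the Briot--Bouquet reduction stripped away and the appropriate rotation of $k_\psi$ chosen according to the sign of $\psi'(0)$. In effect you have supplied the details the authors chose to leave implicit.
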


Now we have our result for some well-known choices of the function $\psi$ and some recently studied:
\begin{corollary}\label{all-result}
	Let $g\in \mathcal{A}$. If $g <<f$, where $f\in \mathcal{S}_{c}^{*}(\psi)$ in $\mathbb{D}$ then
	$$\vert g'(z)\vert \leq \vert f'(z) \vert $$ holds in $\vert z \vert \leq r_{\psi},$
	where  $r_{\psi} \in (0,1)$ is the smallest root of 
	$$q(r)=0,$$ where
	\begin{itemize}
		\item [$(i)$]  if $\psi(z)= \frac{1+Dz}{1+Ez}$, where $-1\leq E<D\leq1$, then $$q(r)=(1-r^2)((1-Dr)/(1-Er))-2r.$$
		
		\item [$(ii)$] if $\psi(z)=\frac{1+(1-2\alpha)z}{1-z}$, where $0\leq\alpha<1$, then	$$q(r)=(1-r)(1-(1-2\alpha)r)-2r.$$
		
		\item [$(iii)$] if  $\psi(z)=\left(\frac{1+z}{1-z}\right)^{\eta}$, where $0<\eta \leq1$, then	 $$q(r)=(1-r^2)((1-r)/(1+r))^{\eta}-2r.$$
		
		\item [$(iv)$] if  $\psi(z)=\sqrt{2}-(\sqrt{2}-1)\sqrt{\frac{1-z}{1+2(\sqrt{2}-1)z}}$, then	$$q(r)=(1-r^2)\left(\sqrt{2}-(\sqrt{2}-1)\sqrt{\frac{1+r}{1-2(\sqrt{2}-1)r}}\right)-2r.$$
		
		\item [$(v)$] if  $\psi(z)=(b(1+z))^{1/a}$, where $a\geq1$ and $b\geq 1/2$, then $$q(r)=(1-r^2)(b(1-r))^{1/a}-2r.$$
		
		\item [$(vi)$]  if $\psi(z)=e^z$, then $$q(r)=(1-r^2)-2re^{r}.$$ 
		
		\item [$(vii)$] if $\psi(z)=z+\sqrt{1+z^2}$, then $$q(r)=(1-r^2)(\sqrt{1+r^2}-r)-2r.$$ 
		
		\item [$(viii)$] if  $\psi(z)=\frac{2}{1+e^{-z}}$, then $$q(r)=(1-r^2)-r(1+e^r).$$ 
		
		\item [$(ix)$]  if $\psi(z)=1+\sin{z}$, then $$q(r)=(1-r^2)(1-\sin{r})-2r.$$
	\end{itemize}
	The radii constants $r_{\psi}$ in context of the above cases are all sharp.
\end{corollary}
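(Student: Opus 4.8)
The plan is to read off each of $(i)$--$(ix)$ as a direct specialization of the immediately preceding theorem for the class $\mathcal{S}^*_{c}(\psi)$. That theorem already reduces the majorization radius to the smallest positive root of $(1-r^2)m(r)-2r=0$, where $m(r)=\min_{|z|=r}|\psi(z)|$ equals $\psi(-r)$ when $\psi'(0)>0$ and $\psi(r)$ when $\psi'(0)<0$. Hence, for a fixed choice of $\psi$, the only genuine content is to identify $m(r)$ and then substitute it into this equation; the sharpness is inherited verbatim. Accordingly I would organise the proof of each item around three checks: (a) that $\psi$ is an admissible dominant, i.e. $\psi(0)=1$, $\Re\psi>0$, and $\psi$ is univalent on $\mathbb{D}$; (b) the sign of $\psi'(0)$; and (c) the explicit value of $m(r)$.

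Step (a) is immediate because every function listed is one of the standard Ma--Minda functions whose univalence and positivity of real part are recorded in the references: the Janowski function in $(i)$ together with its order-$\alpha$ and strongly starlike specialisations in $(ii)$ and $(iii)$, and the more recent lemniscate-, exponential-, sigmoid-, and sine-type dominants in $(iv)$--$(ix)$. Step (b) is a one-line differentiation at the origin, giving $\psi'(0)=D-E$ in $(i)$, $2(1-\alpha)$ in $(ii)$, $2\eta$ in $(iii)$, $\tfrac1a b^{1/a}$ in $(v)$, the value $1$ in $(vi)$, $(vii)$, and $(ix)$, and $\tfrac12$ in $(viii)$, with $(iv)$ handled by the same computation (one checks $\psi'(0)=-(\sqrt2-1)(1-2\sqrt2)/2>0$). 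In every case $\psi'(0)>0$, so the preceding theorem always selects the branch $m(r)=\psi(-r)$.

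The crux is Step (c): verifying that $\min_{|z|=r}|\psi(z)|$ is genuinely attained at the real boundary point $z=-r$ rather than at some interior angle. I would derive this from the two structural features common to all nine dominants, namely that each has real Taylor coefficients — so $\overline{\psi(\bar z)}=\psi(z)$ and $\theta\mapsto|\psi(re^{i\theta})|$ is even — and that each maps $\mathbb{D}$ onto a domain symmetric about the real axis, convex or starlike with respect to $\psi(0)=1$, and contained in the right half-plane. Combining the reflection symmetry with the convexity or starlikeness of the image forces $|\psi(re^{i\theta})|$ to be monotone in $\theta$ on $[0,\pi]$, so the extrema over the circle fall on the real diameter; and since $\psi$ is increasing there (because $\psi'(0)>0$), the minimum lands at $z=-r$. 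Where a clean monotonicity argument is awkward — for example $(iv)$ and $(ix)$, whose image boundaries are less elementary — I would instead verify $m(r)=\psi(-r)$ directly by showing that $\partial_\theta|\psi(re^{i\theta})|^2$ has no sign change on $(0,\pi)$. This is the step I expect to carry the analytic weight of the corollary.

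With $m(r)=\psi(-r)$ fixed, each stated $q(r)$ is then pure algebra: one writes $\psi(-r)$ explicitly and substitutes into $(1-r^2)m(r)-2r$. The only cases needing a clearing step are the transcendental ones, where I would multiply through to absorb the denominator — by $e^{r}$ in $(vi)$ to pass from $(1-r^2)e^{-r}-2r=0$ to $(1-r^2)-2re^{r}=0$, and by $1+e^{r}$, followed by division by $2$, in $(viii)$ to reach $(1-r^2)-r(1+e^{r})=0$ — while $(i)$--$(v)$, $(vii)$, and $(ix)$ reduce by direct simplification. This settles each item, and the sharpness assertion is precisely the sharpness already established in the preceding theorem.
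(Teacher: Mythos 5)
Your proposal is correct and matches the paper's (implicit) argument: the paper states this corollary without proof as a direct specialization of the preceding theorem for $\mathcal{S}^*_{c}(\psi)$, which is exactly your plan of checking admissibility, computing $\psi'(0)>0$ to select $m(r)=\psi(-r)$, and substituting into $(1-r^2)m(r)-2r=0$. Your added attention to verifying that $\min_{|z|=r}|\psi(z)|$ is actually attained at $z=-r$ is more careful than the paper, which simply assumes this, and all of your case-by-case computations (including the algebraic clearing in $(vi)$ and $(viii)$) check out.
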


\section*{Conflict of interest}
	The authors declare that they have no conflict of interest.

\end{document}